\begin{document}

\newtheorem{theorem}{Theorem}
\newtheorem{lemma}[theorem]{Lemma}
\newtheorem{claim}[theorem]{Claim}
\newtheorem{cor}[theorem]{Corollary}
\newtheorem{prop}[theorem]{Proposition}
\newtheorem{definition}{Definition}
\newtheorem{question}[theorem]{Question}
\newcommand{\hh}{{{\mathrm h}}}
\newcommand{\dif}{{{\mathrm d}}}

\numberwithin{equation}{section}
\numberwithin{theorem}{section}
\numberwithin{table}{section}

\def\sssum{\mathop{\sum\!\sum\!\sum}}
\def\ssum{\mathop{\sum\ldots \sum}}
\def\iint{\mathop{\int\ldots \int}}

\def\squareforqed{\hbox{\rlap{$\sqcap$}$\sqcup$}}
\def\qed{\ifmmode\squareforqed\else{\unskip\nobreak\hfil
\penalty50\hskip1em\null\nobreak\hfil\squareforqed
\parfillskip=0pt\finalhyphendemerits=0\endgraf}\fi}

\newfont{\teneufm}{eufm10}
\newfont{\seveneufm}{eufm7}
\newfont{\fiveeufm}{eufm5}
%
%
%
%
\def\frak#1{{\fam\eufmfam\relax#1}}

\newcommand{\bflambda}{{\boldsymbol{\lambda}}}
\newcommand{\bfmu}{{\boldsymbol{\mu}}}
\newcommand{\bfxi}{{\boldsymbol{\xi}}}
\newcommand{\bfrho}{{\boldsymbol{\rho}}}

\def\fK{\mathfrak K}
\def\fT{\mathfrak{T}}

\def\fA{{\mathfrak A}}
\def\fB{{\mathfrak B}}
\def\fC{{\mathfrak C}}

\def \balpha{\bm{\alpha}}
\def \bbeta{\bm{\beta}}
\def \bgamma{\bm{\gamma}}
\def \blambda{\bm{\lambda}}
\def \bchi{\bm{\chi}}
\def \bphi{\bm{\varphi}}
\def \bpsi{\bm{\psi}}

\def\eqref#1{(\ref{#1})}

\def\vec#1{\mathbf{#1}}


\def\cA{{\mathcal A}}
\def\cB{{\mathcal B}}
\def\cC{{\mathcal C}}
\def\cD{{\mathcal D}}
\def\cE{{\mathcal E}}
\def\cF{{\mathcal F}}
\def\cG{{\mathcal G}}
\def\cH{{\mathcal H}}
\def\cI{{\mathcal I}}
\def\cJ{{\mathcal J}}
\def\cK{{\mathcal K}}
\def\cL{{\mathcal L}}
\def\cM{{\mathcal M}}
\def\cN{{\mathcal N}}
\def\cO{{\mathcal O}}
\def\cP{{\mathcal P}}
\def\cQ{{\mathcal Q}}
\def\cR{{\mathcal R}}
\def\cS{{\mathcal S}}
\def\cT{{\mathcal T}}
\def\cU{{\mathcal U}}
\def\cV{{\mathcal V}}
\def\cW{{\mathcal W}}
\def\cX{{\mathcal X}}
\def\cY{{\mathcal Y}}
\def\cZ{{\mathcal Z}}
\newcommand{\rmod}[1]{\: \mbox{mod} \: #1}

\newcommand{\sL}{\ensuremath{\mathscr{L}}}

\def\vr{\mathbf r}

\def\e{{\mathbf{\,e}}}
\def\ep{{\mathbf{\,e}}_p}
\def\em{{\mathbf{\,e}}_m}

\def\Tr{{\mathrm{Tr}}}
\def\Nm{{\mathrm{Nm}}}

 \def\SS{{\mathbf{S}}}

\def\lcm{{\mathrm{lcm}}}

\def\({\left(}
\def\){\right)}
\def\fl#1{\left\lfloor#1\right\rfloor}
\def\rf#1{\left\lceil#1\right\rceil}

\def\mand{\qquad \mbox{and} \qquad}




\hyphenation{re-pub-lished}

\mathsurround=1pt

\def\bfdefault{b}
\overfullrule=5pt

\def \F{{\mathbb F}}
\def \K{{\mathbb K}}
\def \Z{{\mathbb Z}}
\def \Q{{\mathbb Q}}
\def \R{{\mathbb R}}
\def \C{{\mathbb C}}
\def \N{{\mathbb N}}
\def\Fp{\F_p}
\def \fp{\Fp^*}

\def \Prob{{\mathrm {}}}
\def\e{\mathbf{e}}
\def\ep{{\mathbf{\,e}}_p}
\def\epp{{\mathbf{\,e}}_{p^2}}
\def\er{{\mathbf{\,e}}_r}
\def\eM{{\mathbf{\,e}}_M}
\def\eps{\varepsilon}
\def\ord{\mathrm{ord}\,}
\def\vec#1{\mathbf{#1}}

\def \li {\mathrm {li}\,}

\def\mand{\qquad\mbox{and}\qquad}

\newcommand{\commI}[1]{\marginpar{%
\begin{color}{magenta}
\vskip-\baselineskip 
\raggedright\footnotesize
\itshape\hrule \smallskip I: #1\par\smallskip\hrule\end{color}}}
\def\xxx{\vskip5pt\hrule\vskip5pt}

\newcommand{\commL}[1]{\marginpar{%
\begin{color}{magenta}
\vskip-\baselineskip 
\raggedright\footnotesize
\itshape\hrule \smallskip L: #1\par\smallskip\hrule\end{color}}}
\def\xxx{\vskip5pt\hrule\vskip5pt}

\title{Elliptic Curves in Isogeny Classes}

\author[I. E. Shparlinski]{Igor E. Shparlinski} 
\address{Department of Pure Mathematics, University of New South Wales, 
Sydney, NSW 2052, Australia}
\email{igor.shparlinski@unsw.edu.au}

\author[L. Zhao]{Liangyi Zhao} 
\address{Department of Pure Mathematics, University of New South Wales, 
Sydney, NSW 2052, Australia}
\email{l.zhao@unsw.edu.au}

\date{\today}

\begin{abstract} We show that the  distribution of elliptic curves in isogeny classes
of curves with a given value of the Frobenius trace $t$
becomes close to uniform even when $t$ is averaged over very short intervals
inside the Hasse-Weil interval.
\end{abstract}

\keywords{Elliptic curves, isogeny classes, class number}

\subjclass[2010]{11G07, 11L40, 11N35}

\maketitle

\section{Introduction}

\subsection{Motivation} 
Let $p> 3$ be prime and let $E$ be an elliptic curve over  the field $\F_p$ of $p$ elements 
given by an affine {\it Weierstrass equation\/} of the form
\begin{equation}
\label{eq:Weier}
y^2 = x^3 + ax+b,
\end{equation}
with coefficients $a,b\in \F_p$, such that $4a^3 + 27 b^2 \ne 0$.

Clearly, there are $p^2 + O(p)$ 
suitable equations of the form~\eqref{eq:Weier}. 
Furthermore, 
they generate $J = 2p  + O(1)$ distinct (that is, non-isomorphic 
over $\F_p$) curves,
and for most of the curves there are exactly $(p-1)/2$ 
distinct equations~\eqref{eq:Weier}, see~\cite{Len}
for a discussion of these properties. 

We recall that the cardinality of the set $E(\F_p)$  of $\F_p$-rational points on any
elliptic curve $E$ satisfies the {\it Hasse--Weil\/} bound which we formulate as 
\begin{equation}
\label{eq:H-W} 
\#E(\F_p)  - p -1 \in [-T, T], 
\end{equation}
where 
$$
T = \fl{2p^{1/2}}, 
$$
we refer the reader to~\cite{Silv} for these and other general properties of
elliptic curves. 

Curves with the same value of $\#E(\F_p)$ 
are said to be isogenous.  Hence, we see from~\eqref{eq:H-W} that there 
are $4p^{1/2} + O(1)$ isogeny classes.

Accordingly, we denote by $I(t)$ the number of distinct isomorphism 
classes in the isogeny class of curves with $\#E(\F_p)=p+1 - t$, for $t \in [-T, T]$.  
Clearly, the average value of $I(t)$ is   
\begin{equation}
\begin{split}
\label{eq:Aver It}
\frac{1}{2T+1} \sum_{-T \le t \le T} I(t) & = \frac{J}{2T+1} \\
& =  \frac{2p + O(1)}{4 p^{1/2} + O(1)}  = 0.5 p^{1/2} + O(1).
\end{split}
\end{equation}
Thus, on average, each isogeny class contains $ 0.5 p^{1/2}$ 
isomorphic curves, which motivates the study of the distribution of the 
values 
$$
\iota(t) = \frac{I(t)}{0.5 p^{1/2}}
$$

Lenstra~\cite{Len} has obtained upper
and lower bounds for this number which show that typically the values of $I(t)$
are of the same order of magnitude as their average value.
In particular,  by~\cite[Proposition~1.9~(a)]{Len}, for any $t \in [-2p^{1/2}, 2p^{1/2}]$
we have
\begin{equation}
\label{eq:It upper}
\iota(t) = O\(\log p \(\log\log p\)^{2}\),
\end{equation}
with an absolute implied constant. We see that the upper bound~\eqref{eq:It upper}
contains some extra logarithmic factors compared to the average  value  $1$, see~\eqref{eq:Aver It}

On the other hand, the result of Birch~\cite{Birch} on the 
{\it Sato-Tate conjecture\/} over the family of all isomorphism classes 
of elliptic curves over $\F_p$ implies the asymptotic formula
(which is uniform over real $\alpha, \beta \in [-1,1]$)
\begin{equation}
\label{eq:It ST}
\sum_{\alpha T \le t \le \beta T} \iota(t) = \mu(\alpha, \beta) T+ o(p), 
\end{equation}
where $\mu(\alpha, \beta)$ is the {\it Sato-Tate density\/} given by
$$
\mu(\alpha, \beta) = \frac{2}{\pi} \int\limits_{\arccos \alpha}^{\arccos \beta} \sin^2 \vartheta \ \dif\vartheta.
$$

Here we obtain an upper bound which in some sense interpolates between~\eqref{eq:It upper}
and~\eqref{eq:It ST} and improves~\eqref{eq:It upper} on average over $t$ over rather short interval.
 
\subsection{Notation}

Throughout the paper,  $p$  always denotes a  prime 
number and $q$ always denotes a prime power, thus we we 
$\F_q$ to denote the field of $q$ elements. 

Furthermore, the letters  $k$, $m$ and $n$ (in both the upper and
lower cases) denote positive integers.

We recall that the notations $U = O(V)$  and $V \ll U$ are all
equivalent to the assertion that the inequality $|U|\le cV$ holds for some
constant $c>0$.
The implied constants in the symbols `$O$' and `$\ll$'  
are absolute. 

For a complex $z$, we  define $\e(2 \pi i z)$ and $\e_r(z) = \e(z/r)$. 

Finally, the notation $z\sim Z$ means that $z$ satisfies the inequalities
$Z< z\leq 2Z$. 

\subsection{Main Result}

We extend the definition of $I(t)$ and $\iota(t)$ to arbitrary 
finite fields of $q$ elements. 

\begin{theorem}
\label{thm:Aver I} Suppose that $R$ is an integer with 
$$
0  < R < 2R < 2\sqrt{q}.
$$ 
We have, 
$$
 \frac{1}{R} \sum_{t \sim R} \iota(t) \ll \frac{\log q}{\sqrt{\log R}} (\log  \log q)^{7/2}.
$$
\end{theorem}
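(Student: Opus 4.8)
The plan is to pass from $I(t)$ to class numbers, linearise via the class number formula, and then bound the resulting average of $L(1,\chi)$ through its second moment.

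First I would invoke Deuring's description of isogeny classes: for ordinary $t$, that is for $\gcd(t,q)=1$, the quantity $I(t)$ equals the Kronecker class number $H(4q-t^2)$, the sum of the class numbers $h(\mathcal{O})$ over the orders $\mathcal{O}$ between $\Z[\pi]$ and the maximal order of $\Q(\sqrt{t^2-4q})$. The supersingular $t$, those with $\gcd(t,q)>1$, number at most $O(R/p+1)$ and carry $\iota(t)=q^{o(1)}$, so they are absorbed into the error term. Writing $t^2-4q=D_0 f^2$ with $D_0$ a fundamental discriminant and $f$ the conductor, Dirichlet's class number formula gives $h(D_0)=\tfrac{w\sqrt{|D_0|}}{2\pi}L(1,\chi_{D_0})$, and summing the order class number formula over divisors of $f$ yields $H(4q-t^2)\ll \sqrt{4q-t^2}\,L(1,\chi_{D_0})\,g(t)$, where $g(t)$ is a multiplicative factor supported on the conductor with $g(t)\ll \sigma(f)/f\ll \log\log q$. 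Since $t\sim R$ forces $\sqrt{4q-t^2}\asymp\sqrt q$, after normalising I obtain the pointwise bound $\iota(t)\ll L(1,\chi_{D_0(t)})\,g(t)$.

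The exponents in the target suggest reducing the first moment to a second moment: by Cauchy--Schwarz, $\frac1R\sum_{t\sim R}\iota(t)\le\bigl(\frac1R\sum_{t\sim R}\iota(t)^2\bigr)^{1/2}$, so it suffices to prove $\frac1R\sum_{t\sim R}\iota(t)^2\ll \frac{(\log q)^2}{\log R}(\log\log q)^{7}$, which after the square root is exactly the claimed bound. Using the pointwise estimate above this becomes a bound for $\frac1R\sum_{t\sim R}L(1,\chi_{D_0(t)})^2 g(t)^2$. I would replace $L(1,\chi_{D_0})$ by a truncated Dirichlet series $\sum_{n\le z}\chi_{D_0}(n)/n$, the tail being controlled by the Pol\'ya--Vinogradov bound for $\sum_{n\le x}\chi_{D_0}(n)$, and expand the square, so that the problem is reduced to the character-sum averages $\frac1R\sum_{t\sim R}\bigl(\tfrac{t^2-4q}{N}\bigr)$ with $N=n_1n_2\le z^2$ weighted by $1/(n_1n_2)$.

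These averages are the heart of the matter. When $N$ is a perfect square the symbol is essentially the indicator of $\gcd(t^2-4q,N)=1$, and the corresponding diagonal contribution is bounded, producing the main term. When the squarefree part $N_1>1$ of $N$ is coprime to $q$, the complete sum $\sum_{t\bmod N_1}\bigl(\tfrac{t^2-4q}{N_1}\bigr)$ factorises through the prime evaluations $\sum_{t\bmod\ell}\bigl(\tfrac{t^2-4q}{\ell}\bigr)=-1$, so it is bounded in absolute value, and completing the incomplete sum over the interval $t\sim R$ gives cancellation. I expect the main obstacle to be precisely this off-diagonal estimate over the thin quadratic family: since there are only $R\ll\sqrt q$ values of $t$ while the moduli run up to a power of $q$, a term-by-term completion is too lossy, and one must instead deploy a large-sieve or mean-value inequality for the sums $\sum_{t\sim R}(\tfrac{t^2-4q}{N})$, together with an optimal choice of the truncation length $z$ as a power of $R$. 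The tension between needing $z$ large, to approximate $L(1,\chi)$ faithfully, and $z$ small, so that the interval of length $R$ still exhibits cancellation, is what prevents reaching the true size $O(1)$ and instead costs the factor $\sqrt{\log R}$; the conductor weights $g(t)^2$, handled by a standard divisor-moment estimate, contribute the powers of $\log\log q$.
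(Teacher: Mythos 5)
Your opening reduction is the same as the paper's: Lenstra's identities convert $I(t)$ into $\sqrt{4q-t^2}\,L(1,\chi)\cdot(\text{conductor factor})$, the conductor factor contributing the $\log\log$ powers (this is Lemma~\ref{lem:It Lt}), so the problem becomes a first moment of $L(1,\chi_{t})$ over $t\sim R$. After that, however, your plan has a genuine gap. The Cauchy--Schwarz passage to a second moment buys nothing (the second moment over this sparse family is not easier than the first; indeed the only a priori control you have on it is $\log q$ times the first moment, which is circular), and the truncation strategy is internally inconsistent. To control the tail of $L(1,\chi_{D_0})$ by P\'olya--Vinogradov you need $z\gg\sqrt{q}\log q$; but you also want $z$ to be ``a power of $R$'' so that the off-diagonal moduli $N=n_1n_2\le z^2$ stay small relative to the $R\le\sqrt{q}$ available values of $t$. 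For small $R$ (the theorem allows $R$ as small as a power of $\log q$) these two requirements are irreconcilable: with $z\asymp\sqrt{q}\log q$ the moduli reach size $q(\log q)^2$, each individual sum $\sum_{t\sim R}\bigl(\tfrac{t^2-4q}{N}\bigr)$ admits no cancellation by completion, and no off-the-shelf mean-value inequality rescues the off-diagonal at the second-moment level. The large sieve for the sparse quadratic moduli $\{4q-t^2: t\sim R\}$ (Lemma~\ref{lem:Sieve}, built on Baier and Baier--Zhao) carries an unavoidable term $qR$ and is therefore non-trivial only when the length of the Dirichlet polynomial exceeds roughly $\max\{q\sqrt{R},q^2/R^2\}$ --- far longer than any $z$ or $z^2$ you can afford.

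This is precisely why the paper does not stop at a second moment: it amplifies. The Dirichlet series for $L(1,\xi_t)$ is split into dyadic blocks; the initial segment of length $L_J=\exp\bigl(O(\log q\sqrt{\log\log q}/\sqrt{\log R})\bigr)$ is bounded trivially by $\log L_J$, and \emph{this} is where the main term $\frac{\log q}{\sqrt{\log R}}\sqrt{\log\log q}$ actually comes from (your narrative has no concrete mechanism producing this factor). Each longer block is handled by Garaev's method: H\"older's inequality raises the block sum to the $2\nu$-th power with $\nu\asymp\sqrt{\log R/\log\log L}$, turning it into a Dirichlet polynomial of length $M^{\nu}$ long enough for the large sieve to win, at the cost of divisor-function coefficients controlled by Lemma~\ref{lem:Div}; the tail $n\ge q^2$ is disposed of by periodicity and partial summation. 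Without this amplification step --- or some substitute for it --- your off-diagonal terms cannot be controlled, and the argument does not close.
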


In particular, we see from  Theorem~\ref{thm:Aver I} that for any fixed $\varepsilon$ 
there is a constant $c(\varepsilon)$ such that for $R \ge q^\varepsilon$ we have 
$$
 \frac{1}{R} \sum_{t \sim R} \iota(t) \le c(\varepsilon) (\log q)^{1/2}  (\log \log q)^{7/2}. 
$$
It is also easy to see that it improves on the ``individual'' bound~\eqref{eq:It upper}
starting with $R \ge (\log \log q)^{3 + \varepsilon}$.

\section{Character sums and the large sieve}

\subsection{Basics on exponential and character sums}
\label{sec:basic}

We recall, that for any integers $z$ and  $r \ge 1$,  we have 
the orthogonality relation
\begin{equation}
\label{eq:Orth}
\sum_{-r/2 \le b < r/2} \er(bz) = \left\{\begin{array}{ll}
r,&\quad\text{if $z\equiv 0 \pmod r$,}\\
0,&\quad\text{if $z\not\equiv 0 \pmod r$,}
\end{array}
\right.
\end{equation}
see~\cite[Section~3.1]{IwKow}. 

Additionally, we need the bound
\begin{equation}
\label{eq:Incompl}
\sum_{n=K+1}^{K+L} \er(bn) \ll  \min\left\{L, \frac{r}{|b|}\right\},
\end{equation}
which holds for any integers  $b$, $K$ and $L\ge 1$ with $0 < |b| \le r/2$,
see~\cite[Bound~(8.6)]{IwKow}.

We also refer to~\cite[Chapter~3]{IwKow} for a background on multiplicative characters.

The link between multiplicative characters and exponential sums is given 
by the following well-known identity (see~\cite[Equation~(3.12)]{IwKow}) 
involving Gauss sums
$$
\tau_r= \sum_{v=1}^r \chi_r(v) \er(v)
$$
with the  quadratic character $\chi_r$ modulo an integer $r\ge 1$, that is, a fully multiplicative 
function of both argument which coincides with the Legendre symbol is $r$ is an odd primes 
and also defined for $r=2$ as 
$$
\chi_2(v) 
= \left\{  \begin{array}{rl}
0 ,& \quad \text{if}\  v   \equiv 0 \pmod 2 ; \\
1 ,& \quad \text{if}\   v\equiv  \pm 1 \pmod 8;\\
-1 ,& \quad \text{if}\ v\equiv  \pm 3 \pmod 8.
\end{array} \right.
$$

It is easy to see that for any  integer $v$ with 
$\gcd(v,r) = 1$,  we have
\begin{equation}
\label{eq:tau chi}
 \(\frac{v}{r}\)   \tau_r  =  \sum_{b=1}^r  \(\frac{b}{r}\)  \er(bv).
\end{equation}

By~\cite[Lemma~3.1]{IwKow} we also have:

\begin{lemma} \label{lem:tau size}
We have
$$
|\tau_r | \leq r^{1/2}.
$$
\end{lemma}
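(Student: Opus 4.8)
The plan is to deduce the bound from a complete second-moment evaluation of twisted Gauss sums, using only the orthogonality relation \eqref{eq:Orth} and the identity \eqref{eq:tau chi}. For an integer $n$, set
$$
\tau_r(n) = \sum_{b=1}^{r} \chi_r(b)\, \er(bn), \qquad \chi_r(b) = \left(\frac{b}{r}\right),
$$
so that $\tau_r = \tau_r(1)$. By \eqref{eq:tau chi}, whenever $\gcd(n,r)=1$ we have $\tau_r(n) = \chi_r(n)\tau_r$, and since $|\chi_r(n)|=1$ for such $n$ this yields $|\tau_r(n)|^2 = |\tau_r|^2$ on the coprime residues.

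Next I would compute the full second moment. Expanding the square and interchanging the order of summation,
$$
\sum_{n=1}^{r} |\tau_r(n)|^2 = \sum_{b_1=1}^{r}\sum_{b_2=1}^{r} \chi_r(b_1)\chi_r(b_2) \sum_{n=1}^{r} \er\!\left((b_1-b_2)n\right).
$$
The inner sum is evaluated by \eqref{eq:Orth}: it equals $r$ when $b_1 \equiv b_2 \pmod r$ and vanishes otherwise. As $b_1,b_2$ range over $1,\dots,r$, only the diagonal $b_1 = b_2$ survives, so
$$
\sum_{n=1}^{r} |\tau_r(n)|^2 = r \sum_{b=1}^{r} \chi_r(b)^2 = r\,\varphi(r),
$$
since $\chi_r(b)^2$ equals $1$ on the $\varphi(r)$ residues coprime to $r$ and $0$ otherwise.

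Finally I would isolate the coprime terms. Restricting the left-hand sum to $n$ with $\gcd(n,r)=1$ and invoking the first step,
$$
\varphi(r)\,|\tau_r|^2 = \sum_{\substack{n=1\\ \gcd(n,r)=1}}^{r} |\tau_r(n)|^2 \le \sum_{n=1}^{r} |\tau_r(n)|^2 = r\,\varphi(r),
$$
where the inequality merely discards the non-negative contribution of the terms with $\gcd(n,r)>1$. Dividing by $\varphi(r)$ gives $|\tau_r|^2 \le r$, which is the claim.

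The argument is essentially self-contained and presents no deep obstacle; the one point requiring care is the bookkeeping in the second moment, namely confirming that the diagonal count $\sum_{b}\chi_r(b)^2$ equals $\varphi(r)$ uniformly in $r$ (including the even moduli handled through the definition of $\chi_2$), so that the factor $\varphi(r)$ cancels cleanly and the discarded terms are genuinely non-negative rather than merely bounded.
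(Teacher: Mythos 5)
Your argument is correct in substance and takes a genuinely different route from the paper, which gives no proof at all and simply quotes \cite[Lemma~3.1]{IwKow}; you supply the classical self-contained second-moment argument instead. The Parseval computation $\sum_{n=1}^{r}|\tau_r(n)|^2=r\varphi(r)$ is unconditional and exactly as you describe. The one point that actually requires care, however, is not the diagonal bookkeeping you flag at the end but your very first step, the constancy of $|\tau_r(n)|$ on the coprime classes. That step rests on \eqref{eq:tau chi}, which the paper asserts without proof; it is established by the substitution $b\mapsto b\bar v\pmod r$ and therefore needs $\chi_r$ to be a genuine Dirichlet character modulo $r$. For odd $r$ this is the Jacobi symbol and all is well, but for $r\equiv 2\pmod 4$ the factor $\chi_2$ in the paper's definition has period $8$, so $\chi_r$ is not periodic modulo $r$, and both \eqref{eq:tau chi} and the identity $|\tau_r(n)|=|\tau_r|$ can fail: for $r=10$ one computes $|\tau_{10}(1)|^2\approx 6.24$ while $|\tau_{10}(3)|^2\approx 1.76$. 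Since you take \eqref{eq:tau chi} as given, this defect is inherited from the paper rather than introduced by you, and it is harmless where the lemma is used (the moduli $\Delta(t)=4q-t^2$ are never $\equiv 2\pmod 4$), but a fully rigorous version of your proof should either restrict to such moduli or verify \eqref{eq:tau chi} in the cases needed.
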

 
\subsection{Large sieve for quadratic moduli}

For $t\in [-T,T]$ we define the function $\Delta(t) = 4q- t^2$.

We make use of the following large sieve type
inequality:

\begin{lemma}
\label{lem:Sieve}
Suppose that $0 < R < 2R < 2\sqrt{q}$.  Let $\alpha_1, \ldots, \alpha_N$ be an arbitrary sequence
of complex numbers and let
$$
Z= \sum_{n= 1}^{ N} |\alpha_n|^2 \mand
\cT(x) = \sum_{n= 1}^{N} \alpha_n \e (nx).
$$
 Then,   we have 
\begin{align*} 
\sum_{t \sim R}  & \sum_{\substack{a =1\\ \gcd(a,\Delta(t)) =1}} ^{\Delta(t)}
\left|\cT(a/\Delta(t))\right|^2\\
& \le 
  \( qR + N + \min \left\{ \sqrt{R}N + \sqrt{q}N^{3/4}, \sqrt{N} q\right \}  \) Z (qN)^{o(1)} .
\end{align*}
\end{lemma}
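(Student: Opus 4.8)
The plan is to pass to the dual formulation of the inequality, reduce the resulting off-diagonal to a counting problem for the fractions $a/\Delta(t)$, and exploit the quadratic shape of the moduli to recast that count as the number of times a quadratic polynomial in the outer variable lands in a short interval modulo $\Delta(t_0)$. First I would invoke the duality principle: the stated inequality with constant $C$ is equivalent to
\[
\sum_{n=1}^N\Bigl|\sum_{t\sim R}\sum_{\substack{a=1\\\gcd(a,\Delta(t))=1}}^{\Delta(t)}\beta_{t,a}\,\e(na/\Delta(t))\Bigr|^2\le C\sum_{t\sim R}\sum_{\substack{a=1\\\gcd(a,\Delta(t))=1}}^{\Delta(t)}|\beta_{t,a}|^2 .
\]
Opening the square and summing over $n$ yields the kernel $\sum_{n=1}^N\e(n\theta)\ll\min\{N,\|\theta\|^{-1}\}$, where $\|\cdot\|$ is the distance to the nearest integer and $\theta=a/\Delta(t)-a'/\Delta(t')$; the Schur test then gives $C\le N+\Sigma$, with the diagonal furnishing the term $N$ and
\[
\Sigma=\max_{t_0,a_0}\ \sum_{t'\sim R}\ \sum_{\substack{a'=1\\\gcd(a',\Delta(t'))=1}}^{\Delta(t')}\min\Bigl\{N,\Bigl\|\tfrac{a_0}{\Delta(t_0)}-\tfrac{a'}{\Delta(t')}\Bigr\|^{-1}\Bigr\} ,
\]
the pair $(t',a')=(t_0,a_0)$ being omitted. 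The coprimality conditions are stripped by M\"obius inversion, the resulting divisor factors being absorbed into $(qN)^{o(1)}$.

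Next I would evaluate $\Sigma$. Writing the distance as $|a_0\Delta(t')-a'\Delta(t_0)|/(\Delta(t_0)\Delta(t'))$ with $\Delta(t_0)\Delta(t')\asymp q^2$, for fixed $t'$ the sum over $a'$ is controlled by the least residue $\nu_0(t')$ of $a_0\Delta(t')$ modulo $\Delta(t_0)$: the residues differing from it by nonzero multiples of $\Delta(t_0)$ contribute $O(q\log q)$ per $t'$, hence the generic term $qR(qN)^{o(1)}$ after summation over $t'\sim R$. The quadratic structure now enters through $4q\equiv t_0^2\pmod{\Delta(t_0)}$, which gives
\[
a_0\Delta(t')=a_0(4q-t'^2)\equiv a_0(t_0-t')(t_0+t')\pmod{\Delta(t_0)} .
\]
After a dyadic splitting of $\min\{N,q^2/|\nu_0|\}$, the whole of $\Sigma$ beyond the generic term is governed, uniformly in $t_0,a_0$ and in the threshold $V$, by
\[
B(V)=\#\bigl\{t'\sim R:\ a_0(t_0^2-t'^2)\equiv r\!\!\pmod{\Delta(t_0)}\ \text{for some }|r|\le V\bigr\}.
\]

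Finally I would bound $B(V)$ in two complementary ways, the minimum of which yields the two branches of the assertion. Completing the sum over $t'$ by the orthogonality \eqref{eq:Orth} and the geometric-sum bound \eqref{eq:Incompl} reduces the error to incomplete quadratic Gauss sums $\sum_{t'\sim R}\e_{\Delta(t_0)}(-ba_0t'^2)$; treating these through their second moment and the Gauss-sum estimate of Lemma~\ref{lem:tau size} (each complete quadratic sum being $O(\Delta(t_0)^{1/2})=O(q^{1/2})$) produces, through the dyadic sum over $V$, the branch $\sqrt R\,N+\sqrt q\,N^{3/4}$. A second, cruder estimate that applies Cauchy--Schwarz in the $n$-variable and forgoes the Gauss-sum cancellation produces the competing branch $\sqrt N\,q$, preferable when $R$ is large. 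Adding the diagonal terms $N$ and $qR$ and invoking duality gives the bound as stated. I expect the main obstacle to be exactly this last step, namely making $B(V)$ uniform in $t_0,a_0,V$: the interval $t'\sim R$ is shorter than $\Delta(t_0)^{1/2}\asymp q^{1/2}$, so an individual incomplete quadratic Gauss sum carries no cancellation and the saving must be extracted from the average over the dual variable $b$, and one must control $\gcd(a_0(t_0\pm t'),\Delta(t_0))$ uniformly, since these divisors govern both the congruence counts and the sizes of the Gauss sums, where a careless treatment would lose powers of $q$ rather than the harmless $(qN)^{o(1)}$.
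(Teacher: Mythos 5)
Your route---dualising, applying the Schur test to the Gram matrix, and reducing to the counting function $B(V)$ via $4q\equiv t_0^2\pmod{\Delta(t_0)}$---is genuinely different from the paper's. The paper never dualises: it obtains the branch $\sqrt{N}q$ by quoting Baier's sparse-moduli large sieve directly, and the branch $\sqrt{R}N+\sqrt{q}N^{3/4}$ by the Baier--Zhao method, i.e.\ the bound $K(D)(N+D^{-1})Z$ with $D=1/N$, where $K(D)$ counts fractions $a/\Delta(t)$ near grid points $b/r+1/(kr^2)$ with \emph{small} denominators $r\le\sqrt{N}$; the double Poisson summation there produces Gauss sums to the small modulus $r$, not to $\Delta(t_0)\asymp q$. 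This structural difference is not cosmetic, because your reduction hits a quantitative wall that no care with gcd's or averaging over the dual variable $b$ can remove.

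Concretely, take $a_0=1$ in your $\Sigma$. All moduli $\Delta(t')=4q-t'^2$ with $t'\sim R$ lie in an interval of length $3R^2=o(q)$ around $4q$, so the fractions $1/\Delta(t')$ cluster within $O(R^2/q^2)$ of one another; in your notation $\nu_0(t')=t_0^2-t'^2$ exactly, whence $B(V)\gg\min\{R,\,V/R\}$ and the contribution of these terms to $\Sigma$ is $\gg N\cdot B(cq^2/N)\gg\min\{NR,\;q^2/R\}$. For $q^2/R^2\le N\le q^2/R^{3/2}$ and $R\le q^{1/2-\varepsilon}$ this exceeds $qR+N+\sqrt{R}N+\sqrt{q}N^{3/4}$ by a positive power of $q$, so the maximal row sum is strictly larger than the first branch of the asserted minimum and the Schur test cannot deliver it; in particular your claim that the second-moment/Gauss-sum treatment of $B(V)$ ``produces the branch $\sqrt{R}N+\sqrt{q}N^{3/4}$'' cannot be right (that treatment gives at best $B(V)\ll\sqrt{VR}\,q^{o(1)}$, hence $NB(q^2/N)\ll q\sqrt{RN}$, which is still too big). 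The obstacle you flagged at the end is therefore not merely technical. Indeed the same clustering yields a lower bound for the original quantity: taking $\alpha_n=\e(-n/(4q))$ and $N\asymp q^2/R^2$ gives $\cT(1/\Delta(t))\gg N$ for every $t\sim R$, so the left-hand side is $\gg RN\cdot Z\asymp (q^2/R)Z$, which exceeds $(qR+N+\sqrt{R}N+\sqrt{q}N^{3/4})Z(qN)^{o(1)}$ in that range. This shows the difficulty is intrinsic to the statement's first branch (and that the paper's assertion that the computations of Baier--Zhao ``go through with minor changes'' deserves scrutiny, since squares $m^2$, $m\sim M$, spread over an interval comparable to their size while the moduli $4q-t^2$, $t\sim R$, do not); in any case, a proof along your lines would at best recover the $\sqrt{N}q$ branch together with $qR+N+q\sqrt{RN}$, not the bound as stated.
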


\begin{proof}  Set
\[ S = \{ \Delta(t)  ~:~ t \sim R \} = \{ \Delta(t)~ : ~ Q_0 \leq \Delta(t) \leq Q_1 \}, \]
where $Q_0=4q-4R^2$ and $Q_1=4q-R^2$.  We start by applying~\cite[Theorem~2]{Ba1}.  Let
\[ S_r  = \{ m \in \N  ~:~ mr \in S \} . \]
We clearly have $S_r \subseteq [Q_0/r, Q_1/r]$ and $S_r$ is empty unless $4q$ is a square modulo $r$.   Moreover,  set
\[ A_r(u,k,l) = \max_{Q_0/r \leq y \leq q/r} \# \{ m \in S_r \cap (y, y+u] ~:~m \equiv l \pmod{k} \} , \]
where $u \geq 0$, $k \in \N$, $l \in \Z$ with $\gcd(k,l)=1$.  
Thus $A_r(u,k,l) =0$ unless the system of congruences 
\[ \frac{t^2 -4q}{r} \equiv l \pmod{k} \]
has a solution in $t$.  Following the arguments 
in~\cite[Section~6]{Ba1} with minor changes, we get that if $k \leq \sqrt{N}$, then
\begin{equation} \label{Abound}
 A_r(u,k,l) \le \left( 1+ \frac{\# S_r/k}{R^2/r} u \right) N^{o(1)} .
 \end{equation}
Now using~\cite[Theorem~2]{Ba1} together with a short computation, we get that 
\begin{equation} \label{firstestba} 
\begin{split}
\sum_{\Delta(t)\in S}  \sum_{\substack{a =1\\ \gcd(a,\Delta(t)) =1}} ^{\Delta(t)}
\left|\cT(a/\Delta(t))\right|^2 \ll \left( N + N^{o(1)} (qR + q \sqrt{N}) \right) Z.
\end{split}
\end{equation}

The remainder of the proof  follows closely to those in~\cite{Ba1} and~\cite{BaZha2}.  
Using~\cite[Lemma~8]{BaZha2}, we have the following.  Suppose that $0 < D \leq 1/2$ and $\{ \beta_l \}$ is a sequence of real numbers such that for all $\alpha \in \R$, there is $\beta_l$ such that
\[ \| \beta_l - \alpha \| \leq D . \]
Then,
\begin{equation} \label{Kls}
 \sum_{\Delta(t) \in S}  \sum_{\substack{a =1\\ \gcd(a,\Delta(t)) =1}}^{\Delta(t)}
\left|\cT(a/\Delta(t))\right|^2 \ll K(D) ( N+D^{-1}) Z,
\end{equation}
where
\begin{align*} 
K(D) = \max_{1 \leq l \leq L} \# \{ a/\Delta(t)~:~\Delta(t) \in S ,\   \gcd(a&, \Delta(t))=1, \\
&  \| a/\Delta(t) - \beta_l \| \leq D \} .
\end{align*}
We choose  $\beta_1, \cdots, \beta_L$  in the same way as in \cite{BaZha2} as rationals
of the form 
\[\beta_l =  \frac{b}{r} + \frac{1}{kr^2}, \qquad l =1, \ldots, L, \]
with some $b,r \in \N$, $r \leq D^{-1/2}$, $\gcd(b,r)=1$, $1 \leq b < r$ and $k \in \Z$ with $\lceil r^{-1} D^{-1/2} \rceil \leq |k| \leq \lceil r^{-2} D^{-1} \rceil$.  Now let
\[ \tau = \frac{1}{\sqrt{D}}, \qquad K = \lceil r^{-1} D^{-1/2} \rceil , \qquad \kappa = \lceil r^{-2} D^{-1} \rceil , \]
and
\begin{align*} 
P(\alpha) = \# \{ a/\Delta(t)~:~\Delta(t) &\in S, \\
& \gcd(a, \Delta(t))=1, \ | a/\Delta(t) - \alpha | \leq D \} . 
\end{align*}
Using~\cite[Lemma~9]{BaZha2} and the discussion that follows therein, we get that
\begin{equation} \label{KPbound}
 K(D) \leq 2 \max_{\substack{r \in \N \\ r \leq \tau}} \max_{\substack{b \in \Z \\ \gcd(b,r)=1}} \max _{K \leq k \leq \kappa} P \left( \frac{b}{r} + \frac{1}{kr^2} \right) .
 \end{equation}

Now set 
$$
\psi(x) = \left( \frac{\sin \pi x}{2x} \right)^2.
$$  
Recall that the Fourier transform of $\psi$ is $\widehat{\psi}(t) = \max \{ 0, 1-|t| \}$ which is compactly supported.  We have
\[ P(\alpha) \leq \sum_{\Delta(t) \in S} \ \sum_{a=-\infty}^{\infty} \psi \left( \frac{a-\alpha \Delta(t)}{4qD} \right) . \]
Following the same computation as that in~\cite[Section~4]{BaZha2}, we get
\[ P (\alpha) \ll qR D + qD \sum_{0 < j < (4qD)^{-1}} \left| \sum_{R \leq t \leq 2R} \e ( \alpha j t^2) \right| , \]
and then arrive at
\begin{equation}
\begin{split}  \label{Palphaest1}
& P \( \frac{b}{r} + \frac{1}{kr^2} \)\\
& \quad  \le q R D + \left( \frac{R}{\sqrt{r}} + \sqrt{R} + \sqrt{qDr} + \sqrt{RqD} \right)  \left( \frac{R}{qD} \right)^{o(1)}.
\end{split}
\end{equation}
Next, we now prove an  analogue of~\cite[Equation~(5.1)]{BaZha2}.  Using~\cite[ Lemma~4]{Ba1} and mindful of the bound~\eqref{Abound}, we get
\begin{equation} \label{Palphaest2}
P \left( \frac{b}{r} + \frac{1}{kr^2} \right) \le  \left( 1 + q D r + q R D \right) \left( \frac{1}{D} \right)^{o(1)},
\end{equation}
if $D/2 \leq 1/(kr^2) \leq D$.  Now we  assume that $1/(kr^2) \geq D$.  
Very much like in~\cite[Equation~(5.2)]{BaZha2}, we get
\begin{align*} 
P\left( \frac{b}{r} + z \right) & \ll 1 + \delta^{-1} \int\limits_{Q_0}^{Q_1} \Omega(\delta, y) \dif y \\
& \ll 1 + \delta^{-1} \int\limits_{-\infty}^{4q} \exp \left( - \frac{4q-y}{R^2} \right)  \Omega(\delta, y) \dif y,
\end{align*} 
where 
\begin{align*} 
 \Omega(\delta, y) & = \sum_{y-\delta \leq \Delta(t) \leq y+ \delta} \sum_{\substack{ m \in J(\delta, y) \\ m \equiv - b \Delta (t) \bmod{r} \\ m \neq 0}} 1 \\
 & \ll \sum_{t=-\infty}^{\infty} \psi \left( \frac{t-\sqrt{4q-y}}{2c\delta/R} \right) \sum_{\substack{ m \neq 0 \\ m \equiv -b \Delta(t) \bmod{r}}} \psi \left( \frac{m-yrz}{8\delta rz} \right) ,
\end{align*} 
for some absolute constant $c$ and $J(\delta ,y) = [(y-4\delta)rz, (y+4\delta) rz]$.  Now applying Poisson summation twice and making a change of variables, we get
\begin{equation} \label{3rdP}
\begin{split}
P & \left( \frac{b}{r} + \frac{1}{kr^2} \right)  \ll \frac{\delta z}{R}  \biggl| \sum_{j=-\infty}^{\infty} \frac{\widehat{\psi}(8j\delta z)}{r^*}  \e \left( -4qj \left( \frac{b}{r} + z \right) \right) \\
& \qquad\qquad\qquad \qquad \quad \sum_{l=-\infty}^{\infty} \widehat{\psi} \left( \frac{2cl\delta}{r^* R} \right) G(-j^*b, l; r^*) E(j,l) \biggr| , 
 \end{split}
 \end{equation}
where 
\[ G(a,l;c) = \sum_{d=1}^{c} \e \left( \frac{ad^2+ld}{c} \right) 
\]
denotes the  quadratic Gauss sum with
 \[ r^* = \frac{r}{(j,r)} , \qquad  j^* = \frac{j}{(j,r)}, \]
and 
\[  E(j,l) = \int\limits_0^{\infty} \exp \left( - \frac{u}{R^2} \right) \e \left( juz - \frac{l\sqrt{u}}{r^*} \right) \dif u .  \]
Note that the upper bound in~\eqref{3rdP} is essentially the same as the right-hand side of~\cite[Equation~(5.3)]{BaZha2}.  The computations of~\cite[ Sections~7, 8 and~9]{BaZha2} go through with minor changes and we arrive at the bound
\begin{equation} \label{Palphaest3}
 P(\alpha) \ll \left( R^{1/2} + R D^{1/4} + R^3 D + \sqrt{r} + \frac{RD}{\sqrt{r}z} \right)  \left( \frac{R}{D} \right)^{o(1)} .
 \end{equation}
Now combining~\eqref{Palphaest3} and~\eqref{Palphaest2} in the same manner as in~\cite[Sections~10]{BaZha2}, we derive that
\begin{equation}
\begin{split}
\label{2ndPest}
P(\alpha) &\le  \left( \sqrt{R} + RD^{1/4} +qRD + \sqrt{r} + \sqrt{qRD} r^{1/4} + qDr \right) \\
& \qquad\qquad\qquad\qquad\qquad\qquad\qquad\qquad \qquad\qquad  \left( \frac{q}{D} \right)^{o(1)} .
\end{split}
\end{equation}
Now recall that $r \leq D^{-1/2}$.  We use~\eqref{Palphaest1} if $r > R$ and~\eqref{2ndPest} if $r \leq R$, we get
\[ P (\alpha) \le \left( qRD + \sqrt{R} +\sqrt{q} D^{1/4} + \sqrt{q}R^{3/4} D^{1/2} \right)  \left( \frac{q}{D} \right)^{o(1)} . \]
Setting $D=1/N$ and applying the above bound to~\eqref{KPbound} and then to~\eqref{Kls}, we get
\begin{equation*}
\begin{split}
 \sum_{t \sim R} & \sum_{\substack{a =1\\ \gcd(a,\Delta(t)) =1}} ^{\Delta(t)}
\left|\cT(a/\Delta(t))\right|^2 \\
& \qquad  \le \left( qR + \sqrt{R}N + \sqrt{q} N^{3/4} + \sqrt{q}R^{3/4} N^{1/2} \right) Z (qN)^{o(1)} .
\end{split}
\end{equation*}
Now if $N \leq q\sqrt{R}$, then 
\[  \sqrt{q}R^{3/4} N^{1/2} \leq qR . \]
If $N > q\sqrt{R}$, then
\[ \sqrt{q}R^{3/4} N^{1/2} \leq \sqrt{R}N . \]
Hence
\begin{equation} \label{ls1}
\begin{split}
 \sum_{t \sim R} & \sum_{\substack{a =1\\ \gcd(a,\Delta(t)) =1}} ^{\Delta(t)}
\left|\cT(a/\Delta(t))\right|^2 \\
& \qquad  \le\left( qR + \sqrt{R}N + \sqrt{q} N^{3/4} \right) Z (qN)^{o(1)} .
\end{split}
\end{equation}
The desired result follows by comparing the above to~\eqref{firstestba}. 
\end{proof}

\subsection{Multiple divisor function}

For positive integers $k$, $M$ and $\nu$ we denote by $d_\nu(k,M)$ the 
number of integer factorisations of $k$ of the form
$$
k = m_1\ldots m_\nu, \qquad \mbox{with} \; 1 \le m_1,\ldots, m_\nu \le M. 
$$

We need the following bound, uniform with respect to all parameters
which is due to Garaev~\cite[Lemma~8]{Gar}. 

\begin{lemma}
\label{lem:Div} 
We have
$$
\sum_{k \le M^\nu} d_{\nu,M}(k)^2 \le M^\nu \(\frac{e \log M}{\nu} + e\)^{\nu^2}. 
$$
\end{lemma}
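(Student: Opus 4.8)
The plan is to read the left-hand side as a \emph{multiplicative energy}. Expanding the square, $\sum_{k\le M^\nu}d_{\nu,M}(k)^2$ counts exactly the tuples $(m_1,\dots,m_\nu,n_1,\dots,n_\nu)\in[1,M]^{2\nu}$ with $m_1\cdots m_\nu=n_1\cdots n_\nu$, since for each fixed value $k$ of the common product there are $d_{\nu,M}(k)$ choices of the $m_i$ and, independently, $d_{\nu,M}(k)$ choices of the $n_j$. I will denote this count by $N$.

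Next I would decouple the two factorisations. Fixing $(m_1,\dots,m_\nu)\in[1,M]^\nu$ — which already supplies the main factor $M^\nu$ — the number of admissible $(n_1,\dots,n_\nu)$ is $d_{\nu,M}(m_1\cdots m_\nu)$, which I bound by the \emph{unrestricted} $\nu$-fold divisor function $d_\nu(m_1\cdots m_\nu)$ (the number of ordered factorisations into $\nu$ positive integer parts, with the size constraint dropped). Since $d_\nu$ is submultiplicative, $d_\nu(m_1\cdots m_\nu)\le d_\nu(m_1)\cdots d_\nu(m_\nu)$; on prime powers this reduces to $\binom{s+t+\nu-1}{\nu-1}\le\binom{s+\nu-1}{\nu-1}\binom{t+\nu-1}{\nu-1}$, which follows from an explicit injection splitting a weak composition of $s+t$ into $\nu$ parts into a pair of weak compositions of $s$ and of $t$. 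Summing over $\vec m$ then factorises completely, giving
$$ N \;\le\; \(\,\sum_{m\le M}d_\nu(m)\)^{\nu}. $$
Everything is thereby reduced to a one-dimensional estimate, and the target shape $M^\nu\(\tfrac{e\log M}{\nu}+e\)^{\nu^2}$ will follow as soon as I can show
$$ \sum_{m\le M}d_\nu(m)\;\le\; M\(\frac{e\log M}{\nu}+e\)^{\nu}. $$

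The heart of the matter — and the step I expect to be the main obstacle — is proving this last bound with the correct $\nu^{-\nu}$ (equivalently $1/\nu!$) saving; the naive estimate $\sum_{m\le M}d_\nu(m)\le M(\sum_{d\le M}1/d)^{\nu}\le M(1+\log M)^{\nu}$ misses precisely this saving and is too weak, because the factorisation into $\nu$ parts below $M$ is forced to be balanced. To recover it I would first pass to $\sum_{m\le M}d_\nu(m)\le M\sum_{\prod_i d_i\le M}\prod_i d_i^{-1}$ (Rankin's trick, using $1\le M/\prod_i d_i$ on the range of summation), and then prove $\sum_{\prod_i d_i\le M}\prod_i d_i^{-1}\le(\log M+\nu)^{\nu}/\nu!$ by induction on $\nu$. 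The inductive step peels off one variable and compares $\sum_{d\le M}d^{-1}(\log(M/d)+\nu-1)^{\nu-1}$ with the integral $\int_1^M t^{-1}(\log(M/t)+\nu-1)^{\nu-1}\,\dif t$ (the integrand being monotone), and the induction closes after an application of Bernoulli's inequality in the form $a^{\nu-1}(a+\nu)\le(a+1)^{\nu}$. Finally $1/\nu!\le(e/\nu)^{\nu}$ turns $(\log M+\nu)^{\nu}/\nu!$ into $\(\tfrac{e\log M}{\nu}+e\)^{\nu}$, and raising to the $\nu$-th power gives the claim. The delicacy lies entirely in tracking the boundary term at $d=1$ in the Euler–Maclaurin comparison so that the induction hypothesis reproduces itself exactly.
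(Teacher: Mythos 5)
Your proof is correct. The paper does not actually prove this lemma --- it is quoted verbatim from Garaev (Lemma~8 of the cited paper) --- so the only question is whether your argument is sound, and it is: the identification of the left-hand side with the number of solutions of $m_1\cdots m_\nu=n_1\cdots n_\nu$ in $[1,M]^{2\nu}$, the decoupling via $d_{\nu,M}(m_1\cdots m_\nu)\le d_\nu(m_1)\cdots d_\nu(m_\nu)$, and the reduction to $\sum_{m\le M}d_\nu(m)\le M(\log M+\nu)^\nu/\nu!$ are all valid, and the shape of the final bound (the exponent $\nu^2$ and the factor $e/\nu$ coming from $1/\nu!\le(e/\nu)^\nu$) is precisely what this route produces, so you have in effect reconstructed Garaev's argument. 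The inductive step also checks out: the term $f(1)=(\log M+\nu-1)^{\nu-1}$ from the sum--integral comparison combines with the integral $\le(\log M+\nu-1)^\nu/\nu$ to give exactly the inequality $a^{\nu-1}(a+\nu)\le(a+1)^\nu$ with $a=\log M+\nu-1$, which is Bernoulli as you say.
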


\subsection{Bound of character sums on average}

Let $\xi_t$ be the quadratic character modulo $\Delta(t)$ (as 
defined in Section~\ref{sec:basic}). We consider the character sums 
$$
S_t(N) =  \sum_{n=1}^{N} \xi_t(n).
$$ 
We follow the ideas of Garaev~\cite[Theorem~3]{Gar} to obtain the 
following result. Throughout this section, we define
\begin{equation}
\label{eq:X def}
X = \max \left\{ q \sqrt{R}, \frac{q^2}{R^2} \right\} .
\end{equation}

\begin{lemma}
\label{lem:Aver Char Sum} 
Suppose that $R$ is an integer with $0 < R < 2R < 2\sqrt{q}$.  
For positive integer $L$ and $\nu$, such that
\begin{equation} \label{Lcond}
L^{\nu} \ge X
\end{equation}
we have
$$
 \sum_{t \sim R}  \max_{N \sim L}  |S_t(N)| \le   R^{1-1/(4\nu)} L^{1+o(1)}   \(\frac{e \log (2L)}{\nu} + e\)^{\nu/2}   ,
$$
as $L\to \infty$. 
\end{lemma}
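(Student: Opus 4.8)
The plan is to estimate the average of $|S_t(N)|$ by raising the character sum to a high power and invoking the large sieve of Lemma~\ref{lem:Sieve}. First I would fix a value $N \sim L$ for each $t$ achieving the maximum and note that it suffices to bound
\[
\Sigma = \sum_{t \sim R} |S_t(N_t)|,
\]
treating the choice $N_t$ as a function of $t$. To access the fourth (or $2\nu$th) moment, I would use H\"older's inequality to pass from the first moment to a higher even moment: writing
\[
\Sigma \le R^{1 - 1/(2\kappa)} \(\sum_{t \sim R} |S_t(N_t)|^{2\kappa}\)^{1/(2\kappa)}
\]
for a suitable integer parameter $\kappa$ (which will end up related to the $\nu$ in the statement, with the exponent $1 - 1/(4\nu)$ suggesting $\kappa = 2\nu$). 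The key is that a high even power of $S_t(N_t)$ expands, via complete multiplicativity of $\xi_t$ and the orthogonality of additive characters in~\eqref{eq:Orth}, into a sum over the products $n_1 \cdots n_\kappa$ that is exactly the shape handled by the large sieve with coefficients $\alpha_n = d_{\kappa, L}(n)$, the multiple divisor function counting factorisations into $\kappa$ parts each of size at most $L$.

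Next I would set up the connection between the power moment and the character sums $\cT(a/\Delta(t))$ appearing in Lemma~\ref{lem:Sieve}. Using the Gauss-sum identity~\eqref{eq:tau chi} together with the size bound $|\tau_{\Delta(t)}| \le \Delta(t)^{1/2}$ from Lemma~\ref{lem:tau size}, I would rewrite $\xi_t(n) = \overline{\tau}_{\Delta(t)} \Delta(t)^{-1/2} \cdot (\text{something}) \sum_{a} (a/\Delta(t)) \e(an/\Delta(t))$, so that after expanding $S_t(N_t)^{\kappa}$ the inner exponential sum becomes a $\cT(a/\Delta(t))$ with $\alpha_n = d_{\kappa,L}(n)$. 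This identifies $Z = \sum_{k \le L^\kappa} d_{\kappa,L}(k)^2$, which is controlled by Lemma~\ref{lem:Div}, and $N$ in the large sieve is $L^\kappa$. Substituting these into the large-sieve bound and taking the $1/(2\kappa)$th root should produce the main term $R^{1 - 1/(4\nu)} L^{1+o(1)}$ together with the divisor factor $\(e\log(2L)/\nu + e\)^{\nu/2}$, the square root of the $\nu^2$-power in Lemma~\ref{lem:Div} after the root is taken. The condition~\eqref{Lcond}, namely $L^\nu \ge X$ with $X = \max\{q\sqrt{R}, q^2/R^2\}$, is precisely what guarantees that the dominant contribution in the large-sieve estimate comes from the intended term rather than from the lower-order pieces.

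The main obstacle I expect is the careful balancing of the three terms $qR + N + \min\{\sqrt{R}N + \sqrt{q}N^{3/4}, \sqrt{N}q\}$ in Lemma~\ref{lem:Sieve} against the normalising factor $\Delta(t)^{-\kappa/2} \sim q^{-\kappa/2}$ coming from the Gauss sums, so that after extracting the $2\kappa$th root the leading term is exactly $L$ and not something larger. Tracking how $N = L^\kappa$ interacts with $qR$, $\sqrt{R}N$, and $\sqrt{N}q$ under the constraint $L^\nu \ge X$ is delicate, and it is here that the precise definition of $X$ in~\eqref{eq:X def} earns its keep: the two terms in the maximum correspond to the two competing regimes in the $\min$ appearing in the large sieve. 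A secondary technical point is handling the maximum over $N \sim L$ uniformly in $t$; I would deal with this by a standard dyadic or completion argument, inserting a short factor of size $L^{o(1)}$ (for instance via partial summation or a smooth majorant), which is absorbed harmlessly into the stated $L^{1+o(1)}$.
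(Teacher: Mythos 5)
Your plan follows essentially the same route as the paper: complete the sum to remove the dependence on $N$, apply H\"older to pass to a high even moment, expand the power into a divisor-weighted sum, convert $\xi_t$ into additive characters via the Gauss-sum identity \eqref{eq:tau chi} and Lemma~\ref{lem:tau size}, and feed the result into Lemma~\ref{lem:Sieve} with $Z$ controlled by Lemma~\ref{lem:Div}. Two pieces of bookkeeping need correcting, however. First, the right choice is $\kappa=\nu$ (i.e.\ the $2\nu$-th moment with $\nu$ factors in the expansion), not $\kappa=2\nu$: the saving $R^{-1/(4\nu)}$ does not come from doubling the H\"older exponent, but from the fact that the dominant term of the large sieve under \eqref{Lcond} is $\sqrt{R}NZ$ rather than $RNZ$, so that $W_b^{2\nu}\le R^{2\nu-1}\cdot\sqrt{R}\,M^{2\nu+o(\nu)}\bigl(\tfrac{e\log M}{\nu}+e\bigr)^{\nu^2}$ and the $2\nu$-th root yields $R^{1-1/(4\nu)}$ together with the exponent $\nu/2$ on the divisor factor; your own observation that this factor is the square root of the $\nu^2$-power after extracting the root is consistent only with $\kappa=\nu$, whereas $\kappa=2\nu$ would give exponent $\nu$ and the condition $L^{2\nu}\ge X$, which is not the stated result.

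Second, the completion step (via \eqref{eq:Orth} and \eqref{eq:Incompl}, costing a harmless factor $\sum_b (|b|+1)^{-1}\ll\log L$) must be carried out \emph{before} the expansion and the application of the large sieve, not as an afterthought: if you expand $S_t(N_t)^{\nu}$ directly, the coefficients $d_{\nu,N_t}(n)$ depend on $t$, while Lemma~\ref{lem:Sieve} requires a single sequence $\alpha_n$ common to all $t\sim R$. The paper therefore expands the complete twisted sums $\sum_{m\le M}\xi_t(m)\eM(bm)$ with $M=2L$ fixed, producing coefficients $\rho_{b,\nu}(k)$ bounded by $d_{\nu,M}(k)$ uniformly in $t$. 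A last small point: the two terms of $X$ in \eqref{eq:X def} come from forcing $\sqrt{R}M^{\nu}$ to dominate $qR$ and $\sqrt{q}M^{3\nu/4}$ respectively, always within the first branch of the $\min$ in Lemma~\ref{lem:Sieve}, rather than from a choice between the two branches as you suggest. None of these affects the overall strategy, which is the paper's.
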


\begin{proof} For each integer $t \in [-T,T]$ we choose  $N_t \sim L$, 
with 
$$
|S_t(N_t)| = \max_{N \sim L} |S_t(N)|.
$$
Let $M = 2L$. 
Using~\eqref{eq:Orth}, for $N_t \sim L$ we write
\begin{equation*}
\begin{split}
\sum_{n=1}^{N_t} \xi_t(n) &=
 \sum_{m=1}^M \xi_t(m)\frac{1}{M} \sum_{n=1}^{N_t} \sum_{b=-L}^{L-1}  \eM(b(m-n)) \\
&= \frac{1}{M} \sum_{b=-L}^{L-1}   \sum_{n=1}^{N_t} \eM(-bn)
 \sum_{m=1}^M  \xi_t(m) \eM(bm). 
\end{split}
\end{equation*}
Recalling~\eqref{eq:Incompl}, we derive
$$
\left|\sum_{n=1}^{N_t} \xi_t(n)\right| \ll
 \sum_{b=-L}^{L-1}  \frac{1}{|b|+1}   
\left|\sum_{m=1}^M  \xi_t(m) \eM(bm)\right| .
$$
Therefore, 
\begin{equation}
\label{eq:S Wb}
 \sum_{t \sim R}  \max_{N \sim L} |S_t(N)|   = 
 \sum_{t \sim R}  \left|\sum_{n=1}^{N_t} \xi_t(n)\right| \ll \sum_{b=-L}^{L-1}  \frac{1}{|b|+1}   W_b 
\end{equation}
where 
$$
W_b =   \sum_{t \sim R}   \left|\sum_{m=1}^M  \xi_t(m) \eM(bm)\right| .
$$

By H{\"o}lder's inequality, we get the bound
$$
W_b^{2\nu} \le R^{2\nu   -1}  \sum_{t \sim R} 
\left|\sum_{m=1}^M  \xi_t(m) \eM(bm)\right|^{2\nu}.
$$
We now note that 
$$
\(\sum_{m=1}^M  \xi_t(m) \eM(bm)\)^\nu = \sum_{k=1}^{K} \rho_{b,\nu}(k)  \xi_t(k) , 
$$
 where $K = M^\nu$ and 
$$
\rho_{b,\nu}(k) = \sum_{\substack{m_1,\ldots,m_\nu=1\\ m_1\ldots m_\nu = k}}^M 
\eM(b( m_1+\ldots + m_\nu)). 
$$
Using~\eqref{eq:tau chi}, we  write
$$
\(\sum_{m=1}^M  \xi_t(m) \eM(bm)\)^\nu = \sum_{k=1}^{K} \rho_{b,\nu}(k)  
 \frac{1}{\tau_t}  \sum_{v=1}^{\Delta(t)} \xi_t(v) \e_{\Delta(t)}(kv).
$$
Changing the order of summation, 
by Lemma~\ref{lem:tau size} and the Cauchy inequality, we obtain,
$$ 
\left|\sum_{m=1}^M  \xi_t(m) \eM(bm)\right|^{2\nu} \le
 \sum_{v=1}^{\Delta(t)} \left| \sum_{k=1}^{K} \rho_{b,\nu}(k)  \e_{\Delta(t)}(kv)\right|^2.
$$
Thus,  
$$
W_b^{2\nu} \le R^{2\nu   -1} 
 \sum_{t \sim R}   \sum_{\substack{v=1 \\ \gcd(v,\Delta(t))=1}}^{\Delta(t)} \left| \sum_{k=1}^{K} \rho_{b,\nu}(k)  
\e_{\Delta(t)}(kv)\right|^2 
$$
Using that $|\rho_{b,\nu}(k)| \le d_{\nu,M}(k)$ and recalling  Lemma~\ref{lem:Div}, 
we now derive from Lemma~\ref{lem:Sieve},
estimating $Z$ as
$$
Z \ll M^\nu    \(\frac{e \log M}{\nu} + e\)^{\nu^2},
$$   
that 
\begin{equation*}
\begin{split}
W_b^{2\nu} \le & R^{2\nu   -1} M^\nu  \(\frac{e \log M}{\nu} + e\)^{\nu^2} \\
&\quad 
 \( qR + M^\nu + \min \left\{ \sqrt{R}M^\nu + \sqrt{q}M^{3\nu/4}, M^{\nu/2} q \right\}  \)   \(M^{\nu}q\)^{o(1)}. 
\end{split}
\end{equation*}
Always using the first term in the above minimum, we obtain 
\begin{equation*}
\begin{split}
 W_b^{2\nu} \le R^{2\nu   -1}  & M^\nu  \(\frac{e \log M}{\nu} + e\)^{\nu^2}  \\
 & \( qR + \sqrt{R} M^\nu + \sqrt{q} M^{3\nu/4}    \)   \(M^{\nu}q\)^{o(1)}. 
\end{split}
\end{equation*}
Mindful of the inequality $L^{\nu} \ge \max \{ q\sqrt{R} , q^2/R^2 \} $, we see that in the above inequality, the term $\sqrt{R} M^{\nu}$ dominates over $qR+\sqrt{q} M^{3\nu/4}$ .  So the last display simplifies further as
$$
W_b^{2\nu}\le R^{2\nu   -1/2}  M^{(2+o(1))\nu} \(\frac{e \log M}{\nu} + e\)^{\nu^2}. 
$$
Therefore
\begin{equation*}
\begin{split}
W_b \le  R^{1-1/(4\nu)} L^{1+o(1)}   \(\frac{e \log (2L)}{\nu} + e\)^{\nu/2}    .
\end{split}
\end{equation*}
Inserting this into~\eqref{eq:S Wb}, we conclude the proof.  
\end{proof} 

We remark here that simply using the classical large sieve inequality, the sum in Lemma~\ref{lem:Sieve} is
\[\sum_{t \sim R}   \sum_{\substack{a =1\\ \gcd(a,\Delta(t)) =1}} ^{\Delta(t)}
\left|\cT(a/\Delta(t))\right|^2 \leq (q^2 +N) Z . \]
Using this bound, one can still arrive at a non-trivial estimate in Lemma~\ref{lem:Aver Char Sum}, but the condition in \eqref{Lcond} needs to be replaced by $L^{\nu} > q^2$.  Moreover, as in \cite{Zhao}, we can conjecture that
\[
\sum_{t \sim R}   \sum_{\substack{a =1\\ \gcd(a,\Delta(t)) =1}} ^{\Delta(t)}
\left|\cT(a/\Delta(t))\right|^2  \leq (qR + N) (qN)^{o(1)}Z . \]
Having this bound at one's disposal, a non-trivial bound in Lemma~\ref{lem:Aver Char Sum} can be obtained as soon as $L^{\nu} > qR$.

We now obtain a bound on character sums $S_t(N)$ on average starting from rather short intervals of 
length $N \sim L$ with 
\begin{equation}
\label{eq:L large}
\frac{\log L}{\sqrt{\log \log L}} \geq \frac{4 \log X}{\sqrt{\log R}},  
\end{equation}
where, as before, $X$ is given by~\eqref{eq:X def}.

\begin{cor}
\label{cor:Aver Char Sum 1} Suppose that $R$ is an integer with $0 < R < 2R < 2\sqrt{q}$.  
For all integers $L$ satisfying~\eqref{eq:L large}, 
we have
$$
 \sum_{t \sim R}  \max_{N \sim L}  |S_t(N)| \le  R L \exp\(-(7/8 +o(1))  \sqrt{\log R \log  \log L}\) 
$$
as $L \to \infty$. 
\end{cor}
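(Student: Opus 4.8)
The plan is to obtain Corollary~\ref{cor:Aver Char Sum 1} from Lemma~\ref{lem:Aver Char Sum} by an essentially optimal choice of the free parameter $\nu$. It is convenient to write the conclusion of Lemma~\ref{lem:Aver Char Sum} in the form $R L^{1+o(1)} \exp\(f(\nu)\)$, where
\[
f(\nu) = - \frac{\log R}{4\nu} + \frac{\nu}{2}\log\(\frac{e\log(2L)}{\nu}+e\) .
\]
Thus the whole problem reduces to selecting an admissible $\nu$, that is, one satisfying~\eqref{Lcond}, for which $f(\nu)$ is suitably negative, and then reading off the exponent.

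First I would simplify the divisor factor. The choice of $\nu$ below will be of order $\(\log R/\log\log L\)^{1/2}$, which is $L^{o(1)}$ and in particular $o(\log L)$; hence the term $e\log(2L)/\nu$ dominates inside the logarithm and
\[
\log\(\frac{e\log(2L)}{\nu}+e\) = (1+o(1))\log\log L .
\]
Consequently $f(\nu) = -\log R/(4\nu) + (1+o(1))\nu\log\log L/2$, and it remains to choose $\nu$ so that this is negative and of the clean order $\sqrt{\log R\,\log\log L}$.

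The key step is then the choice
\[
\nu = \rf{\tfrac14\(\log R/\log\log L\)^{1/2}} .
\]
Here the hypothesis~\eqref{eq:L large} does exactly the work required to guarantee admissibility: rearranging~\eqref{eq:L large} gives $\log X/\log L \le \tfrac14\(\log R/\log\log L\)^{1/2}\le \nu$, so that $L^\nu \ge X$ and Lemma~\ref{lem:Aver Char Sum} applies, with $X$ as in~\eqref{eq:X def}. Writing $\nu = c\(\log R/\log\log L\)^{1/2}$ with $c = \tfrac14+o(1)$, a direct substitution gives
\[
f(\nu) = \(-\frac{1}{4c}+\frac{c}{2}\)(1+o(1))\sqrt{\log R\,\log\log L} .
\]
Since the function $c\mapsto -1/(4c)+c/2$ takes the value $-7/8$ at $c=1/4$, this yields $f(\nu)=-(7/8+o(1))\sqrt{\log R\,\log\log L}$, which is precisely the exponent in the statement.

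The step demanding the most care, and the main obstacle, is the bookkeeping of the several subpolynomial and lower-order contributions: the factor $L^{1+o(1)}$ inherited from the large-sieve input of Lemma~\ref{lem:Aver Char Sum}, the $(1+o(1))$ in the simplification of $\log\log L$, and the perturbation of $c$ away from $1/4$ caused by rounding $\nu$ to an integer. The crux is to verify that each of these affects only the $o(1)$ sitting inside the exponent $-(7/8+o(1))\sqrt{\log R\,\log\log L}$ and never the constant $7/8$ itself; in particular one uses~\eqref{eq:L large} a second time to ensure $\nu$ is large enough that the rounding correction, of relative size $\(\log\log L/\log R\)^{1/2}$, is negligible in the regime where the corollary is applied. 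I would note in passing that, because $f$ is increasing in $\nu$, a still smaller admissible $\nu$ would in fact give an even stronger bound; the value $c=1/4$ is chosen not because it minimises $f$ but because it produces the clean, uniform exponent in the stated closed form.
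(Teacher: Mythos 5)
Your proposal is correct and follows essentially the same route as the paper: the paper also takes $\nu=\rf{\frac14\sqrt{\log R/\log\log L}}+3$, uses~\eqref{eq:L large} exactly as you do to verify $L^{\nu}\ge X$, bounds the divisor factor by $\exp\((1/8+o(1))\sqrt{\log R\log\log L}\)$ and $R^{-1/(4\nu)}$ by $\exp\(-(1+o(1))\sqrt{\log R\log\log L}\)$, giving the same $-1+\tfrac18=-\tfrac78$ bookkeeping. The only substantive caveat — that the rounding of $\nu$ (and hence the deviation of your $c$ from $1/4$) is absorbed into the $o(1)$ only when $\sqrt{\log R/\log\log L}\to\infty$ — is one you flag explicitly, whereas the paper leaves it implicit in its asymptotic for $R^{1/(4\nu)}$.
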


\begin{proof} We assume that $L$ is sufficiently large and choose 
$$
\nu = \rf{\frac{1}{4} \sqrt{\frac{\log R}{ \log \log L}}} +3.
$$
We see from~\eqref{eq:L large}  that
$$
\nu \log L \ge 4 \nu  \log X \sqrt{ \frac{\log  \log L}{\log R}} \ge  \log X,
$$
and thus Lemma~\ref{lem:Aver Char Sum} is applicable with this parameter $\nu$. 
Furthermore
\begin{equation*}
\begin{split}
 \(\frac{e \log (2L)}{\nu} + e\)^{\nu/2}&  \le 
  \(  \log (2L)   + e\)^{\nu/2}\\
 & = \exp\((1/2+o(1))  \nu  \log  \log (2L) \)\\
 & = \exp\((1/8+o(1)) \sqrt{\log R \log \log L}\). 
\end{split}
\end{equation*}
We also have 
$$
R^{1/ 4\nu} = \exp\(\frac{1}{4\nu }  \log R \)=   \exp\((1 +o(1)) \sqrt{\log R \log \log L}\).
$$
Thus  recalling
Lemma~\ref{lem:Aver Char Sum}, we conclude the proof. 
\end{proof} 


\section{Proof of Theorem~\ref{thm:Aver I}}

\subsection{Isogeny classes and $L$-functions}

Let $\sL(t) = L(1, \chi_t)$ be the value of the $L$-function at $s=1$ 
which correspond to the quadratic character modulo $\Delta_t=4p-t^2$ and
let  $\sL^*(t) = L(1, \chi_{t}^*)$ be the value of the $L$-function at $s=1$ 
which corresponds to the quadratic character modulo $\Delta_{t}^*= \Delta_t/f_t^2$ 
where $f$ is the largest integer such that $f^2 \mid \Delta$ and 
$\Delta_0 \equiv 0,1 \pmod 4$. 

We need the following two results which follow from the identities and estimates 
given by Lenstra~\cite[Pages 654--656]{Len}. We obtain the following 
two relations. 
First we have
\begin{equation}
\label{eq:I and L}
I(t) = \frac{\sqrt{\Delta_t}}{2 \pi} \sL^*(t) \psi(f_t)
\end{equation}
where $\psi(f)$ is an explicitely defined function which for an integer $f \ge 1$ 
satisfies the bound 
\begin{equation}
\label{eq:psi}
\psi(f) \ll \(\log \log (f+2)\)^2.
\end{equation}
Then,  we also have 
\begin{equation}
\label{eq:L and L}
\sL(t)  = \sL^*(t)  \prod_{\substack{\ell \mid f\\\ell~\text{prime}}} \(1 - \frac{\chi_t^*(\ell)}{\ell}\),
\end{equation}
(which follows from the Dirichlet product formula for $L$-functions).

Clearly, 
$$
 \prod_{\substack{\ell \mid f\\\ell~\text{prime}}} \(1 - \frac{\chi_t^*(\ell)}{\ell}\)^{-1}
 \le\prod_{\substack{\ell \mid f\\\ell~\text{prime}}} \(1 - \frac{1}{\ell}\)^{-1}
$$
Using the fact that $f$ has at most $O(\log f)$ prime factors and recalling the
Mertens formula, see~\cite[Equation~(2.16)]{IwKow}, we obtain 
$$
 \prod_{\substack{\ell \mid f\\\ell~\text{prime}}} \(1 - \frac{\chi_t^*(\ell)}{\ell}\)^{-1} 
 \ll \log \log (f+2).
$$
Hence, collecting~\eqref{eq:I and L}, \eqref{eq:psi} and~\eqref{eq:L and L} together we obtain

\begin{lemma}
\label{lem:It Lt} 
We have, 
$$
I(t) \ll \sqrt{\Delta_t} \sL(t) (\log \log q)^3.
$$ 
\end{lemma}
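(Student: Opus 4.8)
The plan is to combine the three relations \eqref{eq:I and L}, \eqref{eq:psi} and \eqref{eq:L and L} assembled above, together with the Mertens-type estimate for the Euler product over the prime divisors of $f$; no new input is needed, since the analytic content has already been extracted from Lenstra~\cite{Len}.

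First I would solve the Dirichlet product identity \eqref{eq:L and L} for $\sL^*(t)$, writing
\[
\sL^*(t) = \sL(t) \prod_{\substack{\ell \mid f\\ \ell~\text{prime}}} \(1 - \frac{\chi_t^*(\ell)}{\ell}\)^{-1}.
\]
By the bound already established just before the statement, namely that $f$ has $O(\log f)$ prime factors and hence, via Mertens' formula, the inverse Euler product is $\ll \log\log(f+2)$, this gives $\sL^*(t) \ll \sL(t) \log\log(f+2)$.

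Next I would insert this into the exact formula \eqref{eq:I and L} and invoke \eqref{eq:psi} to control $\psi(f_t)$, obtaining
\[
I(t) = \frac{\sqrt{\Delta_t}}{2\pi}\, \sL^*(t)\, \psi(f_t) \ll \sqrt{\Delta_t}\, \sL(t)\, \log\log(f+2)\,\(\log\log(f+2)\)^2,
\]
that is, $I(t) \ll \sqrt{\Delta_t}\, \sL(t)\, \(\log\log(f+2)\)^3$. Finally, since $f^2 \mid \Delta_t$ we have $f \le \sqrt{\Delta_t}$, and as $\Delta_t \le 4q$ this yields $f \le 2\sqrt{q}$, so that $\log\log(f+2) \ll \log\log q$ uniformly in $t$. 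Absorbing the $f$-dependent factor into $\log\log q$ then gives the claimed estimate.

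There is no substantive obstacle here: the whole weight of the lemma sits in the identities \eqref{eq:I and L} and \eqref{eq:L and L} of Lenstra and in the two logarithmic bounds \eqref{eq:psi} and the Mertens estimate, all of which are in hand. The only point that requires a moment's care is verifying that $f$ is at most $2\sqrt{q}$, which is what allows the three $\log\log(f+2)$ factors to be replaced uniformly by $\log\log q$ and is essential for the implied constant to remain absolute rather than depending on $t$.
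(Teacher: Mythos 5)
Your proposal is correct and follows exactly the route the paper takes: the paper's proof of this lemma consists precisely of ``collecting'' \eqref{eq:I and L}, \eqref{eq:psi} and \eqref{eq:L and L} together with the Mertens-type bound on the inverse Euler product, which is what you have written out in detail. Your extra remark that $f \le \sqrt{\Delta_t} \le 2\sqrt{q}$, so that $\log\log(f+2) \ll \log\log q$ uniformly, is a point the paper leaves implicit but is the right justification.
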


We remark that with a little bit of care, one can be more precise with the 
double logarithmic function in Lemma~\ref{lem:It Lt}. However, to streamline the
exposition, we ignore this minor savings and concentrate on the powers
of $\log q$. 

\subsection{Bounds of $L$-functions} We now give abound on the average value of $\sL(t) $
over dyadic intervals. 

\begin{lemma}
\label{lem:Aver L} Suppose that $R$ is an integer with 
$$
(\log q)^2  < R < 2R < 2\sqrt{q}.
$$  
Then we have
$$
 \sum_{t \sim R} | \sL(t)| \ll  \log q \sqrt{ \frac{\log  \log q}{\log R}}
$$
as $R\to \infty$. 
\end{lemma}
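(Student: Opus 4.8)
The plan is to bound the full sum $\sum_{t\sim R}|\sL(t)|$ directly, by reducing each value $\sL(t)=L(1,\chi_t)$ to the incomplete character sums $S_t(N)=\sum_{n\le N}\xi_t(n)$ whose \emph{$t$-averages} are controlled by Corollary~\ref{cor:Aver Char Sum 1} (here $\chi_t=\xi_t$ is the quadratic character modulo $\Delta(t)$). For every $t$ with $\chi_t$ non-principal, Abel summation gives the exact identity $\sL(t)=\int_1^\infty S_t(u)\,u^{-2}\,\dif u$, since $S_t(u)/u\to 0$ by the boundedness of $S_t$. A dyadic decomposition of the range of $u$ then yields $|\sL(t)|\ll\sum_{L}L^{-1}\max_{N\sim L}|S_t(N)|$, the outer sum being over dyadic $L\ge 1$. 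Summing over $t\sim R$ and exchanging the order of summation reduces the task to estimating $\sum_{L}L^{-1}G(L)$, where $G(L)=\sum_{t\sim R}\max_{N\sim L}|S_t(N)|$ is precisely the quantity bounded in Lemma~\ref{lem:Aver Char Sum} and Corollary~\ref{cor:Aver Char Sum 1}. The $q^{o(1)}$ values of $t$ for which $\Delta(t)$ is a perfect square (so that $\chi_t$ degenerates to a principal character) are few and are absorbed into the error.

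Next I split the dyadic scales at the threshold $L_{\min}$ at which condition~\eqref{eq:L large} first holds, that is, where $\log L/\sqrt{\log\log L}\asymp 4\log X/\sqrt{\log R}$ with $X$ as in~\eqref{eq:X def}. Because $(\log q)^2<R<\sqrt q$ one checks $\log X\asymp\log q$, and solving the defining relation (using $\log\log L_{\min}\asymp\log\log q$) gives the key size $\log L_{\min}\asymp\log q\,\sqrt{\log\log q/\log R}$. For the small scales $1\le L\le L_{\min}$ I use only the trivial bound $|S_t(N)|\le N\le 2L$, so that $G(L)\le 2LR$ and each scale contributes $L^{-1}G(L)\le 2R$; since there are $\ll\log L_{\min}$ such scales, their total is $\ll R\log L_{\min}\ll R\log q\,\sqrt{\log\log q/\log R}$, which is the target size, the factor $R$ arising simply from the $R$ summands.

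For the large scales $L>L_{\min}$ I invoke Corollary~\ref{cor:Aver Char Sum 1}, giving $L^{-1}G(L)\le R\exp\bigl(-(7/8+o(1))\sqrt{\log R\,\log\log L}\bigr)$, which is decreasing in $L$. I cut the range off at $L\asymp q$: for $L\ge\Delta(t)$ the character $\xi_t$ completes whole periods, so $S_t(N)$ is bounded by P\'olya--Vinogradov and the residual tail $\sum_{t\sim R}\int_{\Delta(t)}^\infty S_t(u)u^{-2}\,\dif u\ll\log q$ is negligible. There remain $\ll\log q$ large scales, each at most the value at $L=L_{\min}$, so this range contributes $\ll R\log q\exp\bigl(-(7/8+o(1))\sqrt{\log R\,\log\log q}\bigr)$. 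Comparing with the target, the ratio is $\ll\sqrt{\log R/\log\log q}\;\exp\bigl(-(7/8+o(1))\sqrt{\log R\,\log\log q}\bigr)$, which tends to $0$ because $R>(\log q)^2$ forces $\log R\ge 2\log\log q\to\infty$; hence the large scales are dominated by the small ones. Combining the two ranges yields $\sum_{t\sim R}|\sL(t)|\ll R\log q\,\sqrt{\log\log q/\log R}$.

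The main obstacle is the calibration of the threshold $L_{\min}$: it must be placed exactly where~\eqref{eq:L large} turns on so that simultaneously (i) the crude small-scale estimate already saturates the target $R\log q\,\sqrt{\log\log q/\log R}$, and (ii) the subexponential savings of Corollary~\ref{cor:Aver Char Sum 1} are strong enough to defeat the $\ll\log q$ dyadic scales in the large range. Establishing $\log X\asymp\log q$ and the asymptotics $\log\log L_{\min}\asymp\log\log q$ under the hypothesis $R>(\log q)^2$ is the delicate bookkeeping; the non-summability of the bound of Corollary~\ref{cor:Aver Char Sum 1} as $L\to\infty$ is what forces the P\'olya--Vinogradov cutoff at $L\asymp q$.
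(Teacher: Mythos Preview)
Your proof is correct and follows essentially the same route as the paper: split the dyadic scales at the threshold $L_{\min}=2^J$ where condition~\eqref{eq:L large} first holds, use the trivial bound $|S_t(N)|\le N$ below the threshold and Corollary~\ref{cor:Aver Char Sum 1} above it, and observe that $\log L_{\min}\asymp\log q\,\sqrt{\log\log q/\log R}$ produces the main term. The only cosmetic differences are that the paper truncates the tail at $L\asymp q^2$ via the trivial bound $|S_t(N)|\le\Delta(t)$ rather than at $L\asymp q$ via P\'olya--Vinogradov, and it sums the medium range a bit more carefully (grouping the dyadic indices $j$ into blocks $[e^h,e^{h+1})$ and using $\log R>2\log\log q$ to get a convergent geometric series) to obtain $O(R)$ directly rather than your $o(R\log q)$.
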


\begin{proof} Let  $X$ be defined by~\eqref{eq:X def}. 
We write $L_j = 2^j$ for $j = J, J+1, \ldots$, where $J$ is defined by the inequalities 
$$
\frac{\log 2^{J-1}}{\sqrt{\log \log 2^{J-1}}} < \frac{4 \log X}{\sqrt{\log R}} \le \frac{\log 2^J}{\sqrt{\log \log 2^J}} .
$$
Thus we can apply Corollary~\ref{cor:Aver Char Sum 1}  
with $L = L_j$ for any $j \ge J$.

We also define $I$ by the conditions 
$$
2^{I-1} \le q^2 < 2^I
$$
Since for any integer $N \ge 1$,  we trivially have
$$
\left| \sum_{n =1}^N \xi_t(n)\right| \le \Delta_t \le q
$$
By partial summation, we immediately obtain 
\begin{equation}
\label{eq:large}
\sum_{n \ge L_I} \frac{\xi_t(n)}{n} \ll 1. 
\end{equation}

For $J \le j \le I$,  invoking Corollary~\ref{cor:Aver Char Sum 1} and using partial summation again, 
we derive
$$ 
\sum_{t \sim R} \left| \sum_{L_j \le  n < L_{j+1}} \frac{\xi_t(n)}{n}\right| \le
 R   \exp\(-(7/8 +o(1))  \sqrt{\log R \log  j}\)  .
$$ 
Hence
\begin{align*} 
\frac{1}{R}\sum_{t \sim R} \left| \sum_{L_J \le n \le L_I} \frac{\xi_t(n)}{n}\right| & \ll 
\sum_{J \le j \le I}  \exp\(-(7/8 +o(1))  \sqrt{\log R \log  j}\) \\
& \le \sum_{2 \le j \le I}  \exp\(-(7/8 +o(1))  \sqrt{\log R \log  j}\)\\
& \le \sum_{h =0}^{H}   \exp\(h -(7/8 +o(1))  \sqrt{h \log R}\), 
\end{align*}
where $H = \rf{\log I}$. 
Since $I \sim  2\log q$ we have $H \sim \log \log q$. 
Thus for $\log R > 2 \log \log q$  and $h \le H$, we see that 
$$
\frac{7}{8}   \sqrt{h \log R} \ge \frac{7\sqrt{2}}{8}  h >  \frac{6h}{5}
$$
and we obtain 
\begin{equation}
\begin{split}
\label{eq:med}
\frac{1}{R}\sum_{t \sim R} \left| \sum_{L_J \le n \le L_I} \frac{\xi_t(n)}{n}\right| &
\le \sum_{h =0}^{H}  \exp\(h -(7/8 +o(1))  \sqrt{h \log R}\)   \\
& \le \sum_{h =0}^{H}  \exp\(-(1/5 +o(1))h \)    \ll 1.
\end{split} 
\end{equation}
Finally we also use the trivial bound
\begin{equation}
\label{eq:small}
 \sum_{n < L_J} \frac{\xi_t(n)}{n} \ll \log L_J
 \ll \log X \sqrt{ \frac{\log  \log X}{\log R}}.
\end{equation}
Combining~\eqref{eq:large}, \eqref{eq:med}, and~\eqref{eq:small}, 
and using that $q \le X \le q^2$, 
we conclude the proof. 
\end{proof}

\subsection{Concluding the proof}

We observe that for  $R \le (\log q)^2$ the result follows immediately 
from~\eqref{eq:It upper} (which can be extended to arbitrary fields without 
any changes in the argument). 

Otherwise we combine Lemmas~\ref{lem:It Lt}  and~\ref{lem:Aver L} 
to derive the desired estimate.

%
%
%


\begin{thebibliography}{99}

%
%
\bibitem{Ba1}  S. Baier, `On the large sieve with sparse sets of moduli', {\it J. Ramanujan Math. Soc.}, {\bf 21} (2006),  279--295.

\bibitem{BaZha2}  S. Baier and L. Zhao, `An improvement for the large sieve for square moduli', {\it   J. Number Theory\/}, {\bf 128} (2008),  154--174.


\bibitem{Birch} B.~J.~Birch, `How the number of points of an elliptic curve
over a fixed prime field varies', {\it J.\ Lond. Math.  Soc.}
\textbf{43} (1968),    57--60.

\bibitem{Gar} M.~Z.~Garaev,  `Character sums in short intervals and
the multiplication table modulo a large prime', {\it Monat.
Math.\/}, {\bf  148} (2006),   127--138.

  \bibitem{IwKow} H. Iwaniec and E. Kowalski,
{\it Analytic Number Theory\/}, Amer.  Math.  Soc.,
Providence, RI, 2004.

\bibitem{Len} H. W. Lenstra, `Factoring integers with elliptic
curves',
{\it Annals of Math.\/}, {\bf 126} (1987), 649--673.  

\bibitem{Silv}  J. H. Silverman, {\it The Arithmetic of Elliptic
Curves\/}, Springer-Verlag, Berlin, 2009. 

\bibitem{Zhao}
L. Zhao, `Large sieve inequality for characters to square moduli', 
{\it Acta Arith.\/}, {\bf  112} (2004), 297--308.

\end{thebibliography}
\end{document}